\author{Jun-Ming Zhu\\
Department of Mathematics, East China Normal University,\\ Dongchuan
Road 500, Shanghai 200241, China\\
\\E-mail: junming\_zhu@163.com \\
}
\title{The number of convex pentagons and hexagons in an
$n$-triangular net
}
\date{}
\newtheorem{lem}{\quad\textbf{\Large Lemma}}[section]
\newtheorem{thm}[lem]{\quad\textbf{\Large Theorem}}
\begin{document}
 \maketitle
 \setcounter{section}{0}
\begin{abstract}
In this paper, we obtain the counting formulaes of convex pentagons
and convex hexagons, respectively, in an $n$-triangular net by
solving the corresponding recursive formulaes.
 \end{abstract}
  \textbf{Key words: } $n$-triangular net, convex pentagon, convex
hexagon, regular hexagon
      \textbf{AMS 2000: } 05A15, 39A10
\section{ Introduction }
In the paper \cite{zhu}, the author obtained the counting formulaes
of triangles and quadrilaterals in an $n$-triangular net,
respectively,  by induction. In this paper, we get the counting
formulaes of convex pentagons and hexagons in an $n$-triangular net,
respectively, by solving two corresponding recursive formulaes. We
first give the following definition.

  \textbf{Definition 1.1} Divide  each edge of a (regular) triangular
  into $n$ $(n\geq1)$ equal
  parts, and then construct $n-1$ segments between the dividing points on
  two edges parallel to the third one. Then the graph we get is
  called an $n$-(regular) triangular net.

  See fig. 1,  fig. 2  and fig. 3  in the following.

If $n=1$, then the $n$-triangular net reduces to a triangular.

  By the property of affine transformation, we know that the numbers
 of convex pentagons and convex hexagons in an
$n$-triangular net are only dependent on
$n$
 but independent from the
shape and the size
  of the triangular.

\begin{picture}(120,220)
   \setlength{\unitlength}{1mm}

\put(15,57)
{\begin{picture}(40,30)
\newcounter{tqot}
\setcounter{tqot}{0}
\newcounter{vqot}
\setcounter{vqot}{0}
\newcounter{trot}
\setcounter{trot}{0}
\newcommand{\swrt}[1]{\makebox(0,0)[tr] {$P_{#1}$}}
\newcommand{\szwrt}[1]{\makebox(0,0)[br] {$A_{#1}$}}
\newcommand{\sjwrt}[1]{\makebox(0,0)[bl] {$B_{#1}$}}

  \setlength{\unitlength}{1mm}
   \put(0,0){\line(1,0){30}}
   \put(0,0){\line(1,2){15}}
   \put(30,0){\line(-1,2){15}}
   \put(5,10){\line(1,0){20}}
   \put(10,20){\line(1,0){10}}
   \put(10,0){\line(-1,2){5}}
   \put(20,0){\line(-1,2){10}}
   \put(10,0){\line(1,2){10}}
   \put(20,0){\line(1,2){5}}

   \put(15,-5){\makebox(0,0)[t]{fig. 1: $3$-triangular net $OA_3B_3$}}
    \put(15,30){\makebox(0,0)[bl]{$O$}}
    \put(15,10){\makebox(0,0)[br]{$O'$}}
     \multiput(10,20)(-5,-10){3}
    {\addtocounter{tqot}{1}\szwrt{\arabic{tqot}}}

    \multiput(20,20)(5,-10){3}
    {\addtocounter{trot}{1}\sjwrt{\arabic{trot}}}
     \multiput(10,0)(10,0){2}
    {\addtocounter{vqot}{1}\swrt{\arabic{vqot}}}
\end{picture}}

\put(0,0)
{\begin{picture}(40,40)
\setlength{\unitlength}{0.7mm}
 \put(30,-8){\makebox(0,0)[t]{fig. 2: $6$-triangular net $OA_6B_6$}}

\newcounter{pot}
\setcounter{pot}{0}

\newcounter{sot}
\setcounter{sot}{0}

\newcounter{jsh}
\setcounter{jsh}{0}
\newcommand{\wrt}[1]{\makebox(0,0)[tr] {$P_{#1}$}}
\newcommand{\zwrt}[1]{\makebox(0,0)[tr] {$A_{#1}$}}
\newcommand{\jwrt}[1]{\makebox(0,0)[tl] {$B_{#1}$}}
  \put(30,40){\makebox(0,0)[br]{$O'$}}
\put(0,0){\line(1,0){60}}
 \put(0,0){\line(1,2){30}}

   \put(5,10){\line(1,0){50}}
   \put(10,20){\line(1,0){40}}
   \put(15,30){\line(1,0){30}}
   \put(20,40){\line(1,0){20}}
    \put(25,50){\line(1,0){10}}

   \put(10,0){\line(-1,2){5}}
   \put(20,0){\line(-1,2){10}}
   \put(30,0){\line(-1,2){15}}
   \put(40,0){\line(-1,2){20}}
   \put(50,0){\line(-1,2){25}}
   \put(60,0){\line(-1,2){30}}

   \put(10,0){\line(1,2){25}}
   \put(20,0){\line(1,2){20}}
   \put(30,0){\line(1,2){15}}
   \put(40,0){\line(1,2){10}}
   \put(50,0){\line(1,2){5}}
   \put(30,40){\circle*{0.8}}
   \put(30,60){\makebox(0,0)[bl]{$O$}}
    \multiput(25,55)(-5,-10){6}
    {\addtocounter{pot}{1}\zwrt{\arabic{pot}}}

    \multiput(35,55)(5,-10){6}
    {\addtocounter{sot}{1}\jwrt{\arabic{sot}}}
    \multiput(10,0)(10,0){5}
    {\addtocounter{jsh}{1}\wrt{\arabic{jsh}}}

\end{picture}}

\put(50,0){
\begin{picture}(90,100)

\newcounter{qot}
\setcounter{qot}{0}

\newcounter{rot}
\setcounter{rot}{0}

\setlength{\unitlength}{1mm}
\newcommand{\nzwrt}[1]{\makebox(0,0)[tr] {$A_{#1}$}}
\newcommand{\njwrt}[1]{\makebox(0,0)[tl] {$B_{#1}$}}
 \put(0,0){\line(1,0){90}}
 \put(15,30){\line(1,2){30}}
\put(75,30){\line(-1,2){30}}
 \multiput(5,10)(2,0){40}{\line(1,0){1}}
 \multiput(10,20)(2,0){35}{\line(1,0){1}}
 \multiput(15,30)(2,0){30}{\line(1,0){1}}
 \multiput(5,10)(10,0){7}{\multiput(0,0)(2.5,5){4}{\line(1,2){1.8}}}
 \multiput(25,10)(10,0){7}{\multiput(0,0)(-2.5,5){4}{\line(-1,2){1.8}}}
  \multiput(75,10)(3,6){2}{\line(1,2){1.8}}
  \multiput(15,10)(-3,6){2}{\line(-1,2){1.8}}

   \put(40,70){\makebox(0,0)[bl]{$O'$}}

   \put(20,40){\line(1,0){50}}
    \put(25,50){\line(1,0){40}}
    \put(30,60){\line(1,0){30}}
  \put(35,70){\line(1,0){20}}
  \put(40,80){\line(1,0){10}}

   \multiput(10,0)(10,0){9}{\line(-1,2){5}}

   \put(25,30){\line(-1,2){5}}
    \put(35,30){\line(-1,2){10}}
    \put(45,30){\line(-1,2){15}}
    \put(55,30){\line(-1,2){20}}
   \put(65,30){\line(-1,2){25}}

   \put(25,30){\line(1,2){25}}
   \put(35,30){\line(1,2){20}}
   \put(45,30){\line(1,2){15}}
    \put(55,30){\line(1,2){10}}
    \put(65,30){\line(1,2){5}}

     \multiput(0,0)(10,0){9}{\line(1,2){5}}
     \put(45,90){\makebox(0,0)[bl]{$O$}}

    \put(0,0){\makebox(0,0)[dr]{$A_n$}}
    \put(5,10){\makebox(0,0)[dr]{$A_{n-1}$}}
     \put(10,0){\makebox(0,0)[tl]{$P_1$}}
     \put(90,0){\makebox(0,0)[bl]{$B_n$}}
      \put(85,10){\makebox(0,0)[bl]{$B_{n-1}$}}

     \put(80,0){\makebox(0,0)[tl]{$P_{n-1}$}}

     \multiput(40,85)(-5,-10){6}
    {\addtocounter{qot}{1}\nzwrt{\arabic{qot}}}

    \multiput(50,85)(5,-10){6}
    {\addtocounter{rot}{1}\njwrt{\arabic{rot}}}
      \put(45,70){\circle*{0.8}}
    \put(45,-5){\makebox(0,0)[t]{fig. 3}}
\end{picture}}
\end{picture}
   \vspace{1.2cm}

\section{The counting formulae of convex pentagons
in an $n$-triangular net}
  \begin{thm} The number $P(n)$ of convex pentagons in an
$n$-triangular net is
  \begin{equation}P(n)=
\begin{cases} \frac
{1}{10}(12k^5+25k^4+5k^3-10k^2-2k),  \
&n=2k+1 \ (k=0,1,2,\cdots ),\\
 \frac{1}{10}(12k^5-5k^4-15k^3+5k^2+3k),
 \  &n=2k \ (k=1,2,\cdots ).
\end{cases}    \label{pen}
\end{equation}

  \end{thm}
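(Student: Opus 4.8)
The plan is to sort the convex pentagons by which of the six edge-directions of the grid they omit, parametrize each resulting class by its side-lengths, count how many copies of a given class fit inside the $n$-net, and then collapse the resulting sum --- equivalently, solve the recursion it encodes --- into the closed form \eqref{pen}, with the parity split appearing naturally along the way.

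First I would fix oblique coordinates in which the vertices of the $n$-triangular net are the integer points of $T_n=\{(x,y):x\ge 0,\ y\ge 0,\ x+y\le n\}$, the three grid-line families being $x=\mathrm{const}$, $y=\mathrm{const}$, $x+y=\mathrm{const}$; by the affine invariance noted in Section~1 nothing is lost. Traversed counter-clockwise, a convex grid-polygon has its edge-direction vectors appearing in cyclic order among $d_1=(1,0)$, $d_2=(0,1)$, $d_3=(-1,1)$, $d_4=(-1,0)$, $d_5=(0,-1)$, $d_6=(1,-1)$, and since a convex pentagon has five edges with pairwise distinct directions it omits exactly one $d_i$. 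The order-$6$ symmetry group of $T_n$ splits these six choices into two orbits, $\{d_1,d_3,d_5\}$ and $\{d_2,d_4,d_6\}$, so only two cases arise --- type A, say those omitting $d_1$, and type B, those omitting $d_6$ --- and $P(n)=3M_A(n)+3M_B(n)$ where $M_A,M_B$ count the pentagons of each type.

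For a fixed type, record the five side-lengths as integers $\ge 1$; the closing condition $\sum a_i d_i=0$ gives two scalar relations, leaving three free parameters. Computing the five vertices shows that the smallest upward triangle containing the pentagon is a translate of $T_s$, with $s$ an explicit linear form in the free parameters (in a convenient parametrization $s=a_2+a_3+a_4$ for both types), and that the number of translates of the pentagon contained in $T_n$ equals the number of lattice points of $T_{n-s}$, namely $\binom{n-s+2}{2}$. Hence $M_A(n)=\sum_s N_s^A\binom{n-s+2}{2}$ and $M_B(n)=\sum_s N_s^B\binom{n-s+2}{2}$, where $N_s^{\bullet}$ is the number of admissible side-length triples of size $s$. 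The crucial difference between the types is that for type B the relations impose no constraint beyond $a_i\ge 1$, so $N_s^B=\binom{s-1}{2}$ is a polynomial and $\sum_s\binom{s-1}{2}\binom{n+2-s}{2}=\binom{n+2}{5}$ by a Vandermonde-type identity; but for type A one relation forces a strict inequality between two of the parameters, making $N_s^A$ a quasi-polynomial of period $2$ in $s$ (explicitly $N_{2m}^A=(m-1)^2$ and $N_{2m+1}^A=m(m-1)$). This period-$2$ behaviour is exactly what produces the two-case answer.

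It remains to evaluate $P(n)=3\binom{n+2}{5}+3\sum_{s\ge 4}N_s^A\binom{n-s+2}{2}$. I would take the first difference, $P(n)-P(n-1)=3\binom{n+1}{4}+3\sum_{s=4}^n N_s^A(n-s+1)$, check that it is a degree-$4$ polynomial in $n$ whose coefficients depend only on $n\bmod 2$ (this is the pair of "corresponding recursive formulaes"), and solve it by splitting the summation range by parity, using the standard power-sum formulas, and summing back from $1$ to $n$; the result is a degree-$5$ quasi-polynomial of period $2$, i.e.\ \eqref{pen}, and the finitely many remaining coefficients are pinned down by the directly-computed values $P(1)=P(2)=0$, $P(3)=3$, $P(4)=21$, $P(5)=78$. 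The main obstacle is the geometric bookkeeping of the two middle steps rather than the algebra: enumerating the side-length triples of each type so that the closing relations and the inequalities $a_i\ge 1$ are each imposed exactly once, confirming that the two orbits of omitted directions genuinely have sizes $3$ and $3$ (so that no symmetry merges a type-A pentagon with a type-B one), and verifying that the fit-count $\binom{n-s+2}{2}$ stays correct when the pentagon touches the boundary of $T_n$. Once these are in hand, the recursions and their solution are routine.
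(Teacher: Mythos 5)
Your proposal is correct, and it takes a genuinely different route from the paper. The paper never classifies pentagons by shape: it sets up the second-order difference equation $P(n)=2P(n-1)-P(n-2)+f(n)$, where $f(n)$ counts pentagons meeting both slanted sides $OA_n$ and $OB_n$, computes $f(n)-f(n-1)$ by locating the unique acute vertex of a pentagon that meets all three sides, and then sums ("overlays") three times, splitting by parity at each stage. You instead enumerate shapes directly: omitted direction, closure relations on the side lengths, minimal bounding upward triangle of size $s=a_2+a_3+a_4$, and $\binom{n-s+2}{2}$ translates, giving $P(n)=3\binom{n+2}{5}+3\sum_{s\ge 4}N_s^A\binom{n-s+2}{2}$. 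I checked your key quantitative claims: for type A the closure relations are $a_6=a_3+a_4$, $a_5=a_2-a_4$, so the strict inequality $a_2>a_4$ appears exactly as you say and indeed $N_{2m}^A=(m-1)^2$, $N_{2m+1}^A=m(m-1)$; for type B there is no extra constraint, $N_s^B=\binom{s-1}{2}$, and the Vandermonde collapse to $\binom{n+2}{5}$ is valid; the resulting totals agree with \eqref{pen} for $n=3,\dots,7$ (e.g. $3\binom{6}{5}+3=21$ and $63+9+6=78$, matching your check values $P(4)=21$, $P(5)=78$). One point you flagged does need the care you anticipated: a reflection of the big triangle acts on the omitted direction not by its linear action (which would send $d_1\mapsto d_2$ and appear to merge the orbits) but by $d\mapsto-\sigma(d)$, since the traversal orientation reverses; equivalently, type A pentagons have their acute vertex pointing like a corner of a downward unit triangle and type B like a corner of an upward one, a dichotomy every symmetry of $T_n$ preserves, and in any case rotational symmetry alone already gives the factor $3$ for each type. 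Comparatively, your method buys a structural explanation of the parity phenomenon (it comes solely from the downward-pointing type, i.e., from the inequality $a_2>a_4$) and a clean closed form $3\binom{n+2}{5}$ for the upward-pointing pentagons, at the cost of more geometric bookkeeping; the paper's recursion is more elementary and pictorial but buries the parity split inside repeated summations.
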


  \begin{proof}
 Without loss of generosity,
   we suppose that the divided triangular is regular.
 Then there is only one acute interior angle in each convex
 pentagon.

 Observing the figures above, we know that $P(n)$ can be expressed as
 $2P(n-1)$ (all the pentagons in the $(n-1)$-triangular net $A_1A_nP_{n-1}$
 or $B_1P_1B_{n}$),
 subtracting $P(n-2)$ (pentagons in the $(n-2)$-triangular net $O'P_1P_{n-1}$
 ),
 and then adding the number,
  denoted by $f(n)$,
  of the pentagons of which there are vertexes both on $OA_n$
 and on $OB_n$. This is
 \begin{equation}
 P(n)=2P(n-1)-P(n-2)+f(n).  \label{guanjian}
 \end{equation}

Obviously, we have $P(1)=0$, $P(2)=0$, $P(3)=3$ and $f(1)=0$,
$f(2)=0$, $f(3)=3$.

Now it is crucial to get the expression of $f(n)$. Note that, from
the definition of $f(n)$, $f(n)-f(n-1)$ is the number of pentagons
of which there are vertexes on $OA_n$, $OB_n$ and $A_nB_n$ and of
which
 the unique acute vertex must be on $OA_n$, $OB_n$ or $A_nB_n$.

The number of all the pentagons of which there are vertexes on
$OA_n$, $OB_n$ and $A_nB_n$ and  of which the acute vertex is one of
$O,~A_n$ and $B_n$ is
 $$
 (1+2+3+\cdots+(n-2))\times3.
 $$
The number of all the pentagons of which there are vertexes on
$OA_n$, $OB_n$ and $A_nB_n$ and  of which the acute vertex is one of
 $A_1,\ A_2,\ \cdots, \ A_{n-1}, B_1,\ B_2,\ \cdots ,\ B_{n-1}
 $ and $\ P_1,$ $ P_2,$ $ \cdots,$  $P_{n-1}$
 is
$$
1+2+\cdots+(k-1)+(k-1)+\cdots+2+1 ~~(\mbox{if}~~n=2k),
$$
or
 $$
 1+2+\cdots+(k-2)+(k-1)+(k-2)+\cdots+2+1 £¬~~(\mbox{if}~~n=2k+1).
$$
So we have
    \begin{eqnarray*}
    f(2k)-f(2k-1)&=&(1+2+3+\cdots+(2k-2))\times3\\
&&+1+2+\cdots+(k-1)+(k-1)+\cdots+2+1
\end{eqnarray*}
and
 \begin{eqnarray*}f(2k+1)-f(2k)&=&(1+2+3+\cdots+(2k-1))\times3
\\&&+1+2+\cdots+(k-2)+(k-1)+(k-2)+\cdots+2+1\\
&&\hspace{7.5cm}.
\end{eqnarray*}
This is
\begin{eqnarray*}
 &&f(2k) = f(2k-1)+3(3k^2-5k+2),\\
 && f(2k+1)
=f(2k)+3(3k^2-2k).
\end{eqnarray*}
 Iterating the above formulaes gives
 \begin{eqnarray*}
f(2)&=&f(1)+3(3\cdot1^2-5\cdot1+2), \\
f(3)&=&f(2)+3(3\cdot1^2-2\cdot1),\\
f(4)&=&f(3)+3(3\cdot2^2-5\cdot2+2), \\
f(5)&=&f(4)+3(3\cdot2^2-2\cdot2),\\
&&\cdots\cdots\cdots\cdots,\\
 f(2k)&=&f(2k-1)+3(3\cdot k^2-5\cdot k+2), \\
f(2k+1)&=&f(2k)+3(3\cdot k^2-2\cdot k).
\end{eqnarray*}
Overlay the above formulaes and note that $f(1)=0$ to get
\begin{eqnarray*}
&f(2k+1)=\frac{3}{2}(4k^3-k^2-k),\\
&f(2k)=\frac{3}{2}(4k^3-7k^2+3k).
\end{eqnarray*}
From (\ref{guanjian}), we have $P(n)-P(n-1)=P(n-1)-P(n-2)+f(n)$.
Then
  \begin{eqnarray*}
  &P(2k)-P(2k-1)=P(2k-1)-P(2k-2)+f(2k),\ (k=1,2,\cdots\cdots),\\
&P(2k+1)-P(2k)=P(2k)-P(2k-1)+f(2k+1),\ (k=0,1,2,\cdots\cdots).
\end{eqnarray*}
Overlaying again, we have
\begin{eqnarray*}
P(2k+1)-P(2k)&=&3k^4+2k^3-\frac{3}{2}k^2-\frac{1}{2}k,\\
P(2k)-P(2k-1)&=&3k^4-4k^3+k .
\end{eqnarray*}
Overlaying a third time, we get
 \begin{eqnarray*}
 P(2k+1)=\frac{6}{5}k^5+\frac{5}{2}k^4+\frac{1}{2}k^3-k^2-\frac{1}{5}k,\\
P(2k)=\frac{6}{5}k^5-\frac{1}{2}k^4-
\frac{3}{2}k^3+\frac{1}{2}k^2+\frac{3}{10}k,
\end{eqnarray*}
which completes the proof.

  \end{proof}
The difference equation (\ref{guanjian}) is fundamental in this
paper. This is a linear recurrence relation of order $2$ and
 can also be solved using the method usually used in solving
  recursive formulaes (see, for
 example, \cite[p.218--234, \S 7.2--7.3]{brualdi}). Our method is different from that of \cite{zhu}. Obviously,
  formulaes of this form can also
 be used to get the counting formulaes of
 triangles and quadrilaterals in \cite{zhu} and seem to be
 more understandable.
 We will
also use the equation of this form to get number of convex hexagons
in an $n$-triangular net  in the following.

\section{The counting formulae of convex hexagons
in an $n$-triangular net}
  \begin{thm}
  The number $H(n)$ of convex hexagons in an
$n$-triangular net is
\begin{equation}   \label{hexagon}
    H(n)=\begin{cases}\frac{1}{60}
(8k^6+24k^5+25k^4+10k^3-3k^2-4k),&n=2k+1 (k=0,1,2,\cdots),
\\\frac{1}{60}(8k^6-5k^4-3k^2), &n=2k
(k=1,2,\cdots).
\end{cases}
\end{equation}
  \end{thm}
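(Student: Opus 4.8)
The plan is to mirror the structure of the proof of the pentagon theorem, since the author explicitly flags equation \eqref{guanjian} as the fundamental tool. First I would set up the analogous order-two recurrence
\begin{equation*}
H(n)=2H(n-1)-H(n-2)+g(n),
\end{equation*}
where $g(n)$ counts the convex hexagons having a vertex on both slanted sides $OA_n$ and $OB_n$ of the $n$-triangular net; the $2H(n-1)$ term counts all hexagons lying in one of the two $(n-1)$-subnets $A_1A_nP_{n-1}$ or $B_1P_1B_n$, the $-H(n-2)$ removes the double-counted hexagons lying in the central $(n-2)$-subnet $O'P_1P_{n-1}$, and $g(n)$ adds back exactly the hexagons straddling both slanted edges. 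I would record the small initial values $H(1)=H(2)=H(3)=0$ (a hexagon needs $n\ge 4$), $H(4)$, and the corresponding values of $g$.

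The crux, exactly as in the pentagon case, is to compute $g(n)$, and I expect this to be the main obstacle. The key observation to reuse is that $g(n)-g(n-1)$ counts the convex hexagons that simultaneously touch all three sides $OA_n$, $OB_n$, and $A_nB_n$. Unlike a convex pentagon in the regular triangular net, a convex hexagon inscribed this way has no forced acute vertex, so I cannot organize the count by "which side the unique acute angle lies on"; instead I would classify these boundary-touching hexagons by the positions of the vertices on the three sides (how many vertices on each side, and their spacings), summing the resulting arithmetic-type series. This will produce two polynomial expressions for $g(2k)-g(2k-1)$ and $g(2k+1)-g(2k)$ (degree-two polynomials in $k$, as the pentagon analogue was), which I then telescope, using $g(1)=0$, to get closed forms $g(2k)$ and $g(2k+1)$ as cubic polynomials in $k$.

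With $g(n)$ in hand, the rest is the same three-fold telescoping the author used for $P(n)$: from \eqref{guanjian}-analogue, $H(n)-H(n-1)=\bigl(H(n-1)-H(n-2)\bigr)+g(n)$, so summing $g$ once gives the first differences $H(2k+1)-H(2k)$ and $H(2k)-H(2k-1)$ as quartic polynomials in $k$; summing those once more (with the initial conditions) gives the quintic-plus expressions, and a final summation yields the degree-six polynomials claimed in \eqref{hexagon}. Throughout I would keep the odd-$n$ and even-$n$ cases on separate tracks, since the parity genuinely matters for how many dividing points sit symmetrically on each side. The only real risk is a bookkeeping slip in enumerating the boundary-touching hexagons for $g(n)$; I would sanity-check the final formula against the known small values $H(4)$, $H(5)$, $H(6)$ computed by hand from figures 1--3, which must come out to the values $H(4)=\frac{1}{60}(8\cdot2^6-5\cdot2^4-3\cdot2^2)$, $H(5)=\frac{1}{60}(8+24+25+10-3-4)$, etc., before declaring the proof complete.
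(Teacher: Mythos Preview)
Your overall plan is exactly the paper's: the recurrence $H(n)=2H(n-1)-H(n-2)+g(n)$, interpreting $g(n)-g(n-1)$ as the number of hexagons touching all three sides, then parity-split triple telescoping. But two concrete slips would make the computation come out wrong.

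First, $H(3)=1$, not $0$. The $3$-triangular net already contains the central regular hexagon (delete the three corner unit triangles in fig.~1), so ``a hexagon needs $n\ge 4$'' is false; correspondingly $g(3)=1$. Using $H(3)=0$ as an initial value would shift every telescoped sum by a constant and produce the wrong closed forms. Your own sanity check actually flags this: with $k=1$ the odd-$n$ formula gives $\tfrac{1}{60}(8+24+25+10-3-4)=1$, and $k=1$ corresponds to $n=3$, not $n=5$.

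Second, your degree bookkeeping is off by one at every stage. In the paper the increments
\[
g(2k+1)-g(2k)=\tfrac{k}{2}(4k^2-3k+1),\qquad g(2k)-g(2k-1)=\tfrac{k-1}{2}(4k^2-5k+2)
\]
are \emph{cubic} in $k$, not quadratic; hence $g(2k+1)=k^4$ and $g(2k)=\tfrac12 k(k-1)(2k^2-2k+1)$ are quartic, the first differences $H(2k{+}1)-H(2k)$ and $H(2k)-H(2k{-}1)$ are quintic, and the final sum is sextic. This degree jump over the pentagon case is forced by the target degree~$6$, and it reflects the fact that a hexagon touching all three sides has one more positional degree of freedom than a pentagon (there is no ``unique acute vertex'' to pin down), so the enumeration behind $g(n)-g(n-1)$ is genuinely a double sum, as the paper's long tabular display makes explicit.
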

\begin{proof}
  Just as the analysis in the proof of theorem 2.1, we have
   \begin{equation}
   H(n)=2H(n-1)-H(n-2)+g(n), \label{leisi}
   \end{equation}
 where $g(n)$ denote the number
of the hexagons of which there are vertexes both on $OA_n$
 and on $OB_n$.
Obviously, we have $H(1)=0$, $H(2)=0$, $H(3)=1$ and $g(1)=0$,
$g(2)=0$, $g(3)=1$.

  The equation (\ref{leisi}) is similar to the equation (\ref{guanjian}).
   So we solve it in the same way as (\ref{guanjian}).
 We have
  \begin{eqnarray*}
  g(2k+1)&=&g(2k)\\&&+1+2+3+4+\cdots+(2k-4)+(2k-3)+(2k-2)+(2k-1)
\\&&+2+3+4+\cdots+(2k-4)+(2k-3)+(2k-2)+(2k-2)
\\&&+3+4+\cdots+(2k-4)+(2k-3)+(2k-3)+(2k-3)
\\&&+\cdots\cdots\cdots\cdots
\\&&\underbrace{+(k-2)+(k-1)+k+(k+1)+(k+2)+(k+2)+\cdots+(k+2)}
\limits_{\hbox{The number is }k+2}
\\&&\underbrace{+(k-1)+k+(k+1)+(k+1)+(k+1)+\cdots+(k+1)}\limits_{{\hbox{
The number is }}k+1}
\\&&\underbrace{+k+k+k+k+\cdots+k}\limits_{{\hbox{
The number is }}k}
\\&&+\cdots\cdots\cdots\cdots
\\&&+2+2
\\&&+1
\\&=&g(2k)+1^2+2^2+\cdots+(2k-1)^2
\\&&-(+1+2+3+4+\cdots+(2k-4)+(2k-3)+(2k-2)
\\&&+1+2+3+4+\cdots+(2k-4)
\\&&+\cdots\cdots\cdots\cdots
\\&&+1+2+3+4\\&&+1+2)
\\&=&g(2k)+{k\over2}(4k^2-3k+1),
\end{eqnarray*}
and
 \begin{eqnarray*}g(2k)&=&g(2k-1)\\&&+1+2+3+4+\cdots+(2k-5)+(2k-4)+(2k-3)+(2k-2)
\\&&+2+3+4+\cdots+(2k-5)+(2k-4)+(2k-3)+(2k-3)
\\&&+3+4+\cdots+(2k-5)+(2k-4)+(2k-4)+(2k-4)
\\&&+\cdots\cdots\cdots\cdots
\\&&\underbrace{+(k-2)+(k-1)+k+(k+1)+(k+1)+\cdots+(k+1)}\limits_{{\hbox{
The number is }}k+1}
\\&&\underbrace{+(k-1)+k+k+k+k+\cdots+k}\limits_{{\hbox{
The number is }}k}\\
&&\underbrace{+(k-1)+(k-1)+(k-1)+\cdots+(k-1)}\limits_{{\hbox{ The
number is }}k-1}
\\&&+\cdots\cdots\cdots\cdots
\\&&+2+2
\\&&+1
\\&=&g(2k-1)+1^2+2^2+\cdots+(2k-2)^2
\\&&-(+1+2+3+4+\cdots+(2k-5)+(2k-4)+(2k-3)
\\&&+1+2+3+4+\cdots+(2k-5)
\\&&+\cdots\cdots\cdots\cdots
\\&&+1+2+3\\&&+1)
\\&=&g(2k-1)+{k-1\over2}(4k^2-5k+2).
\end{eqnarray*}
So we have
 \begin{eqnarray*}
 g(2k)&=&{1\over2}k(k-1)(2k^2-2k+1),\\
 g(2k+1)&=&k^4,
\end{eqnarray*}
 and then
 \begin{eqnarray*}
&H(2k)-H(2k-1)={k\over30}(12k^4-15k^3+5k^2-2)
\\&H(2k+1)-H(2k)={k\over30}(12k^4+15k^3+5k^2-2).
\end{eqnarray*}
By overlaying, we get (\ref{hexagon}). This completes the proof.
\end{proof}

%
%

 \end{document}